%
%
\documentclass{amsart}
\usepackage{hyperref,url}

\theoremstyle{definition}

\theoremstyle{remark}




\begin{document}

\title[Iterated primitives of logarithmic powers]{Iterated primitives of logarithmic powers}

\author{Luis A. Medina}
\address{Departmento de Mathematicas,
Rutgers University, Piscataway, NJ 08854 and 
University of Puerto Rico,  San Juan, PR 00931}
\email{lmedina@math.rutgers.edu}

\author{Victor H. Moll}
\address{Department of Mathematics,
Tulane University, New Orleans, LA 70118}
\email{vhm@math.tulane.edu}

\author{Eric S. Rowland}
\address{Department of Mathematics,
Tulane University, New Orleans, LA 70118}
\email{erowland@tulane.edu}

\subjclass[2000]{Primary 26A09, Secondary 11A25}

\date{March 10, 2010}

\keywords{Iterated integrals, unimodality, valuations, von Mangoldt function}

\begin{abstract}
The evaluation of iterated primitives of powers of logarithms is
expressed in
closed form. The expressions contain polynomials with coefficients
given in terms of the harmonic numbers and their generalizations. 
The logconcavity of these polynomials is established.
\end{abstract}

\maketitle

\newcommand{\nn}{\nonumber}
\newcommand{\ba}{\begin{eqnarray}}
\newcommand{\ea}{\end{eqnarray}}
\newcommand{\realpart}{\mathop{\rm Re}\nolimits}
\newcommand{\imagpart}{\mathop{\rm Im}\nolimits}
\newcommand{\lcm}{\operatorname*{lcm}}

\newtheorem{Definition}{\bf Definition}[section]
\newtheorem{Thm}[Definition]{\bf Theorem}
\newtheorem{Example}[Definition]{\bf Example}
\newtheorem{Lem}[Definition]{\bf Lemma}
\newtheorem{Note}[Definition]{\bf Note}
\newtheorem{Cor}[Definition]{\bf Corollary}
\newtheorem{Prop}[Definition]{\bf Proposition}
\newtheorem{Conj}[Definition]{\bf Conjecture}
\newtheorem{Problem}[Definition]{\bf Problem}
\numberwithin{equation}{section}

\section{Introduction} \label{sec-intro}
\setcounter{equation}{0}

The search for closed forms of definite integrals 
has the extra appeal of connecting many diverse 
areas of mathematics. At the beginning of Calculus, the 
student is usually told of 
\begin{equation}
\int_{-\infty}^{\infty}  e^{-x^{2}} \, dx = \sqrt{\pi},
\label{normal}
\end{equation}
\noindent
that hints to a relation between exponential and 
trigonometric functions. The reader will find in 
Chapter $8$ of \cite{irrbook} a collection of 
different proofs of (\ref{normal}). These include the 
classical ones, presented in most textbooks, as well as
one by R. A. Kortram \cite{kortram1} relating this 
evaluation to the number of representations of a 
number as a sum of squares. 

The second author has begun the project of establishing all the entries in 
the classical table of integrals by I. S.
Gradshteyn and I. M. Ryzhik \cite{gr}. An example of the surprising 
connections encountered in this process is the subject of this note. 

Consider the sequence of functions $\{ f_{n}(x): \,  n \in \mathbb{N} \}$ 
defined by the iterated integrals
\begin{eqnarray}
f_{0}(x) & = & \ln(1+x) \label{seq-1} \\
f_{n}(x) & = & \int_{0}^{x} f_{n-1}(t) \, dt. \nonumber
\end{eqnarray}
\noindent
The first few examples are given by
\begin{eqnarray}
f_{1}(x) & = & -x + (1+x) \ln(1+x) \nonumber \\
f_{2}(x) & = & -\frac{x}{4}(3x+2) + \frac{1}{2}(1+x)^{2} \ln(1+x) \nonumber \\
f_{3}(x) & = & -\frac{x}{36}(11x^{2}+15x+6) + 
\frac{1}{6}(1+x)^{3} \ln(1+x) \nonumber \\
f_{4}(x) & = & -\frac{x}{288}(25x^{3}+52x^{2}+42x+12) + 
\frac{1}{24}(1+x)^{4} \ln(1+x).  \nonumber
\end{eqnarray}
\noindent
This data suggests that 
\begin{equation}
f_{n}(x) = -xA_{n}(x) + B_{n}(x) \ln(1+x) 
\end{equation}
\noindent
where $A_{n}$ and $B_{n}$ are polynomials. Moreover, the value 
\begin{equation}
B_{n}(x) = \frac{1}{n!}(1+x)^{n}
\label{exp-B}
\end{equation}
\noindent
can be guessed from the data above. 

In view of the fact that a closed form for the polynomials 
$A_{n}(x)$ seems harder to find, we begin by considering 
\begin{equation}
\alpha_{n} := \lcm \{ \text{Denominators of } A_{n}(x) \},
\end{equation}
the sequence of denominators in
the reduced form of $A_{n}(x)$. This 
sequence starts with
\begin{equation}
\{ 1, \, 1, \, 4, \, 36, \, 288, \, 7200, \, 43200, \, 2116800, \, 
33868800 \}.
\end{equation}
\noindent
A direct search in
Neil Sloane's \cite{sloane}
gives no information. On the other hand, the quotient
\begin{equation}
\beta_{n}:= \frac{\alpha_{n}}{n \alpha_{n-1}} 
\label{beta-def}
\end{equation}
\noindent
gives the values 
\begin{equation}
\{ 1, \, 2, \, 3, \, 2, \, 5, \, 1, \, 7, \, 2, \, 3, \, 1, \, 11, \, 1, \, 13 \}
\end{equation}
\noindent
and this is the sequence $A014963$ of \texttt{Sloane}. Namely
\begin{equation}
\label{beta-form}
\beta_{n} = \begin{cases}
        p \quad & \text{ if } n \text{ is a power of }p \\
        1 \quad & \text{ otherwise}. 
       \end{cases}
\end{equation}
\noindent
The sequence $\beta_{n}$ is the exponential of the von Mangoldt function
\begin{equation}
\Lambda_{n} = \begin{cases}
        \ln p \quad & \text{ if } n \text{ is a power of }p \\
        0 \quad & \text{ otherwise}, 
       \end{cases}
\end{equation}
\noindent
one of the basic functions in the theory of prime numbers \cite{hardy4}.  \\

In this note we provide an explicit formula for the polynomial $A_{n}(x)$ 
in terms of harmonic numbers. This 
establishes the form of $\beta_{n}$ discussed above. 
Section \ref{sec-extension} 
considers the iterated primitives of a power of $\ln(1+x)$. A sequence
of polynomials involving 
the generalized harmonic numbers is given. 

\section{The recurrences} \label{sec-recur}
\setcounter{equation}{0}

The first few values of $f_{n}(x)$ suggest the ansatz 
\begin{equation}
f_{n}(x) = -xA_{n}(x) + B_{n}(x) \ln(1+x), 
\end{equation}
\noindent
with  $A_{n}$ and $B_{n}$ polynomials in $x$.  This 
is replaced in the relation $f_{n}'(x) = f_{n-1}(x)$ to produce
\begin{eqnarray}
B_{n}'(x) & = & B_{n-1}(x) \label{rec-1} \\
xA_{n}'(x) + A_{n}(x) & = & xA_{n-1}(x) + \frac{B_{n}(x)}{1+x} \label{rec-2}.
\end{eqnarray}
\noindent
The expression for $B_{n}(x)$ in (\ref{exp-B}) is obtained directly from here.

\begin{Thm}
\label{thm-an}
The polynomial $A_{n}$ is given for $n \geq 0$ by
\begin{equation}
A_{n}(x) = \frac{1}{n!} \sum_{k=1}^{n} \binom{n}{k} \left( H_{n} - 
H_{n-k} \right) x^{k-1},
\label{A-def}
\end{equation}
\noindent
where $H_{n} = 1 + \tfrac{1}{2} + \tfrac{1}{3} + \cdots + \tfrac{1}{n}$ 
is the harmonic number.
\end{Thm}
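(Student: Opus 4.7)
The plan is to exploit the recurrence~(\ref{rec-2}) together with the known formula $B_n(x)=\frac{1}{n!}(1+x)^n$ from (\ref{exp-B}), and verify the claimed expression coefficient-by-coefficient by induction on $n$.

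First I would rewrite the left-hand side of~(\ref{rec-2}) as the exact derivative $\bigl(xA_n(x)\bigr)'$, which turns the recurrence into
\[
\bigl(xA_n(x)\bigr)' \;=\; xA_{n-1}(x) + \frac{(1+x)^{n-1}}{n!}.
\]
Writing $A_n(x)=\sum_{k=1}^{n} a_{n,k}\,x^{k-1}$ (so that $xA_n(x)=\sum_{k=1}^{n} a_{n,k}x^{k}$) and matching the coefficient of $x^{k-1}$ on both sides produces the scalar recursion
\[
k\,a_{n,k} \;=\; a_{n-1,k-1} + \frac{1}{n!}\binom{n-1}{k-1}, \qquad 1\le k\le n,
\]
with the convention $a_{n-1,0}=0$. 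Since $xA_n(x)$ vanishes at $x=0$ there is no constant of integration to worry about, so this recursion together with the base case $A_0(x)=0$ (forced by $f_0(x)=\ln(1+x)$ and $B_0=1$) determines the $a_{n,k}$ uniquely.

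The heart of the proof is then to verify by induction on $n$ that the candidate
\[
a_{n,k} \;=\; \frac{1}{n!}\binom{n}{k}\bigl(H_n - H_{n-k}\bigr)
\]
satisfies the scalar recursion. Substituting this candidate, the factor $\binom{n}{k}$ absorbs the $k$ on the left and the $\binom{n-1}{k-1}$ on the right via the identities
\[
\frac{k}{n!}\binom{n}{k} \;=\; \frac{1}{(k-1)!(n-k)!}, \qquad
\frac{1}{(n-1)!}\binom{n-1}{k-1} \;=\; \frac{1}{(k-1)!(n-k)!},
\]
reducing the desired equality to
\[
H_n - H_{n-k} \;=\; \bigl(H_{n-1} - H_{n-k}\bigr) + \frac{1}{n},
\]
which is immediate from $H_n = H_{n-1} + \tfrac{1}{n}$. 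The boundary cases $k=1$ and $k=n$ should be checked directly to confirm that the conventions $a_{n-1,0}=0$ and $H_0=0$ are consistent with the formula.

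I do not anticipate a genuine obstacle here: once the recurrence is written in the form $\bigl(xA_n\bigr)'=xA_{n-1}+(1+x)^{n-1}/n!$, everything reduces to the one-line telescoping identity for $H_n$. The only mild subtlety is careful bookkeeping at the two endpoints of the summation range, where the binomial coefficient $\binom{n-1}{k-1}$ does or does not have a partner term coming from $xA_{n-1}(x)$.
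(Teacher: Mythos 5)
Your proposal is correct and follows essentially the same route as the paper: substituting $B_n(x)=(1+x)^n/n!$ into the recurrence (\ref{rec-2}), extracting the coefficient recursion $k\,a_{n,k}=a_{n-1,k-1}+\tfrac{1}{n!}\binom{n-1}{k-1}$, and verifying the harmonic-number formula by induction via $H_n=H_{n-1}+\tfrac1n$. You merely supply the bookkeeping details (the $(xA_n)'$ rewriting, the endpoint cases) that the paper's proof leaves implicit.
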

\begin{proof}
The formula is proved by induction. Write 
\begin{equation}
A_{n}(x) = \sum_{k=1}^{n} a_{n,k}x^{k-1}
\end{equation}
\noindent
and use the recurrence (\ref{rec-2}) with the explicit expression for $B_{n}$
to conclude that $a_{n,1} = 1/n!$ and for $2 \leq k \leq n$
\begin{equation}
ka_{n,k} = a_{n-1,k-1} + \frac{1}{n!} \binom{n-1}{k-1}.
\label{rec-3}
\end{equation}
\noindent
This recurrence now gives (\ref{A-def}).
\end{proof}

\begin{Note}
Iterated integrals of $\ln(1-x)$ were considered by 
Mathar \cite[section~4.3]{mathar}, who obtained a similar recurrence.
\end{Note}

\begin{Thm}
The polynomials $A_{n}(x)$ are given by 
\begin{equation}
A_{n}(x) = \frac{1}{n!} \sum_{k=0}^{n} \binom{n}{k} x^{k} 
\sum_{m=1}^{n-k} \frac{(-x)^{m-1}}{m}. 
\end{equation}
\end{Thm}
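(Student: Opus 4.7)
The plan is to compare the coefficient of $x^{p-1}$ in the proposed formula against the one supplied by Theorem~\ref{thm-an}. Rewriting the right-hand side as $\frac{1}{n!}\sum_{k=0}^{n}\sum_{m=1}^{n-k}\binom{n}{k}\frac{(-1)^{m-1}}{m}\,x^{k+m-1}$ and gathering the terms with $k+m-1=p-1$, i.e.\ setting $j=m=p-k$ (so that the constraints $m\ge 1$, $k\ge 0$, $m\le n-k$ translate, for $1\le p\le n$, into $1\le j\le p$), the coefficient of $x^{p-1}$ in the claimed expression is $\frac{1}{n!}\sum_{j=1}^{p}\frac{(-1)^{j-1}}{j}\binom{n}{p-j}$. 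Theorem~\ref{thm-an} identifies the same coefficient as $\frac{1}{n!}\binom{n}{p}(H_{n}-H_{n-p})$, so the theorem is equivalent to the single binomial identity
\begin{equation*}
\sum_{j=1}^{p}\frac{(-1)^{j-1}}{j}\binom{n}{p-j}=\binom{n}{p}(H_{n}-H_{n-p}),\qquad 1\le p\le n. \tag{$\ast$}
\end{equation*}

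To prove $(\ast)$, I would read both sides as the coefficient of $x^{p}$ in $(1+x)^{n}\ln(1+x)$. The Cauchy product of $(1+x)^{n}=\sum_{k}\binom{n}{k}x^{k}$ with $\ln(1+x)=\sum_{j\ge 1}\frac{(-1)^{j-1}}{j}x^{j}$ reproduces the left-hand side of $(\ast)$ at once, while the identity $(1+x)^{n}\ln(1+x)=\partial_{n}(1+x)^{n}$ (immediate from $(1+x)^{n}=e^{n\ln(1+x)}$) together with logarithmic differentiation of the polynomial $\binom{n}{p}=n(n-1)\cdots(n-p+1)/p!$ produces $\partial_{n}\binom{n}{p}=\binom{n}{p}\sum_{i=0}^{p-1}\frac{1}{n-i}=\binom{n}{p}(H_{n}-H_{n-p})$, matching the right-hand side.

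The main obstacle is the identity $(\ast)$ itself; the coefficient-matching is routine, but the jump from the alternating binomial sum to the harmonic numbers is where the real content sits. The logarithmic differentiation above gives the cleanest route I see; as a purely algebraic fall-back one can prove $(\ast)$ by induction on $n\ge p$, using the Pascal-rule-induced recursion $L_{n}(p)=L_{n-1}(p)+L_{n-1}(p-1)$ on the left-hand side together with the analogous recursion on the right-hand side (verified via $p\binom{n-1}{p}=(n-p)\binom{n-1}{p-1}$), with base case $n=p$ supplied by the classical identity $\sum_{j=1}^{p}\frac{(-1)^{j-1}}{j}\binom{p}{j}=H_{p}$.
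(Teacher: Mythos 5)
Your proposal is correct, but it takes a genuinely different route from the paper. The paper never invokes Theorem \ref{thm-an}: it computes $f_{n}(x)$ directly from the series $f_{n}(x) = -x^{n}\sum_{j\geq 1}\frac{(-x)^{j}}{j(j+1)\cdots(j+n)}$ (obtained by integrating the logarithm series $n$ times), applies the partial fraction decomposition of $1/(j(j+1)\cdots(j+n))$, reassembles the inner sums into $\ln(1+x)$ plus a polynomial tail, and reads off $A_{n}(x)$ after dividing by $-x$; the identity you call $(\ast)$ then appears in the paper as a Corollary, obtained precisely by comparing the two independently derived expressions for $A_{n}(x)$. You instead take Theorem \ref{thm-an} as already established (legitimate, since it precedes this statement) and reduce the claim to $(\ast)$, which you prove by extracting the coefficient of $x^{p}$ from $(1+x)^{n}\ln(1+x)$ in two ways, using $\partial_{n}(1+x)^{n} = (1+x)^{n}\ln(1+x)$ and logarithmic differentiation of $\binom{n}{p}$ as a polynomial in $n$; your coefficient bookkeeping and the induction fall-back (with base case $\sum_{j=1}^{p}\frac{(-1)^{j-1}}{j}\binom{p}{j}=H_{p}$ and the Pascal-type recursions on both sides) check out. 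The only point worth a sentence in a final write-up is the interchange of $\partial_{n}$ with coefficient extraction, justified because the coefficients $\binom{n}{p}$ are polynomials in $n$ (or by uniform convergence for $|x|<1$); alternatively your induction avoids the issue entirely. The trade-off: the paper's argument is self-contained and makes the Corollary a genuine byproduct of two independent evaluations, whereas your argument proves that identity directly (so the Corollary would become redundant) but is shorter and avoids the infinite-series manipulation.
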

\begin{proof}
The definition of $f_{n}(x)$ implies that
\begin{equation}
f_{n}(x)   =  -x^{n} \sum_{j=1}^{\infty} \frac{(-x)^{j}}{j(j+1) \cdots (j+n)}.
\end{equation}
\noindent
Using the partial fraction decomposition
\begin{equation}
\frac{1}{j(j+1) \cdots (j+n)} = \frac{1}{n!} \sum_{k=0}^{n} 
\frac{(-1)^{k}}{j+k} \binom{n}{k}
\end{equation}
\noindent
produces
\begin{eqnarray}
n!f_{n}(x) & = & -x^{n} \sum_{j=1}^{\infty} (-x)^{j} \sum_{k=0}^{n} 
\frac{(-1)^{k}}{j+k} \binom{n}{k} \nonumber \\
& = & -x^{n} \sum_{k=0}^{n} (-1)^{k} \binom{n}{k} (-x)^{-k} 
\sum_{j=1}^{\infty} \frac{(-x)^{j+k}}{j+k} \nonumber \\
& = & - \sum_{k=0}^{n} (-1)^{k} \binom{n}{k} x^{n-k} 
\sum_{m=k+1}^{\infty} \frac{(-x)^{m}}{m} \nonumber \\
& = & - \sum_{k=0}^{n} \binom{n}{k} x^{n-k} 
\left( - \ln(1+x) - 
\sum_{m=1}^{k} \frac{(-x)^{m}}{m} \right) \nonumber \\
& = & (1+x)^{n} \ln(1+x) + \sum_{k=0}^{n} \sum_{m=1}^{k} 
(-1)^{m} \binom{n}{k} \frac{x^{n+m-k}}{m} \nonumber \\ 
& = & (1+x)^{n} \ln(1+x) + \sum_{k=0}^{n} \binom{n}{k} x^{k} 
\sum_{m=1}^{n-k} 
\frac{(-x)^{m}}{m}. \nonumber
\end{eqnarray}
\noindent
Dividing this sum by $-x$ gives the result.
\end{proof}

Comparing the two expressions produced for the polynomials $A_{n}$ yields
the next identity.

\begin{Cor}
Let $n \in \mathbb{N}$. Then 
\begin{equation}
-\sum_{k=0}^{n} \binom{n}{k} x^{k} \sum_{m=1}^{n-k} \frac{(-x)^{m}}{m} = 
\sum_{k=1}^{n} \binom{n}{k} x^{k} \sum_{m=1}^{k} \frac{1}{m+n-k}.
\end{equation}
\end{Cor}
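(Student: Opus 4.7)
The plan is to exploit the fact that Theorem \ref{thm-an} and the second theorem give two presentations of the same polynomial $A_n(x)$. Equating these expressions after clearing the factor $1/n!$ yields
\[
\sum_{k=1}^{n} \binom{n}{k} (H_n - H_{n-k}) x^{k-1} = \sum_{k=0}^{n} \binom{n}{k} x^{k} \sum_{m=1}^{n-k} \frac{(-x)^{m-1}}{m}.
\]
The first step is to multiply both sides by $-x$. On the right-hand side, using $(-x)^{m} = -x\,(-x)^{m-1}$, the factor $-x$ converts $\frac{(-x)^{m-1}}{m}$ into $\frac{(-x)^{m}}{m}$, giving precisely the left-hand side of the corollary:
\[
-\sum_{k=0}^{n} \binom{n}{k} x^{k} \sum_{m=1}^{n-k} \frac{(-x)^{m}}{m} = \sum_{k=1}^{n} \binom{n}{k} (H_n - H_{n-k}) x^{k}.
\]

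The second step is to rewrite the harmonic number difference. By definition,
\[
H_n - H_{n-k} = \sum_{j=n-k+1}^{n} \frac{1}{j},
\]
and the substitution $j = m + n - k$ turns this into $\sum_{m=1}^{k} \frac{1}{m + n - k}$, matching exactly the inner sum on the right-hand side of the corollary.

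There is no real obstacle here: the identity is essentially a bookkeeping consequence of having two closed forms for the same polynomial, combined with a trivial reindexing of a telescoping harmonic sum. The only thing to be careful with is the sign and index shift produced by the factor of $-x$, and confirming that the $k=0$ term on the left-hand side of the corollary vanishes (which it does, since the inner sum at $k=0$ has leading term $(-x)$, making the whole left side have no constant term, consistent with the right-hand side starting at $k=1$).
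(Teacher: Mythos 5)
Your argument is correct and is essentially the paper's own: the corollary is obtained by equating the two closed forms for $A_n(x)$ from the preceding theorems, multiplying through by $n!$ and $-x$, and rewriting $H_n - H_{n-k}$ as $\sum_{m=1}^{k} \frac{1}{m+n-k}$. Aside from a slight looseness of phrasing about where the minus sign lands (your displayed equation is nonetheless correct), there is nothing to add.
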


\section{The denominators of $A_{n}(x)$} \label{sec-deno}
\setcounter{equation}{0}

In this section we analyze the polynomial $A_{n}(x)$ and compute 
its denominator when written in reduced form. The proof employs some 
elementary number theory.

\begin{Thm}
For $n \geq 1$, the common denominator for $A_{n}(x)$ is given by
\begin{equation}
\alpha_{n} = n!  
\, \lcm(1, \, 2, \, \dots, n ).
\label{lcm-2}
\end{equation}
\end{Thm}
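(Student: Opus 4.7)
The plan is to verify the equality one prime at a time: show $v_p(\alpha_n) = v_p(n!) + e$ for every prime $p \leq n$, where $e = \lfloor \log_p n\rfloor = v_p(\lcm(1,\ldots,n))$. Write $c_{n,k}$ for the coefficient of $x^{k-1}$ in $A_n(x)$; Theorem~\ref{thm-an} gives
\[
c_{n,k} = \frac{\binom{n}{k}(H_n - H_{n-k})}{n!} = \frac{H_n - H_{n-k}}{k!\,(n-k)!}.
\]
The upper bound $\alpha_n \mid n!\,\lcm(1,\ldots,n)$ is immediate, since $n!\,\lcm(1,\ldots,n)\cdot c_{n,k} = \binom{n}{k}\sum_{j=n-k+1}^n \lcm(1,\ldots,n)/j$ is an integer.

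For the lower bound I fix a prime $p \leq n$, set $m = \lfloor n/p^e \rfloor \in \{1, \ldots, p-1\}$, and choose the index $k = n - (m-1)p^e$. The goal is to prove $v_p(c_{n,k}) = -v_p(n!) - e$, which forces $v_p(\alpha_n) \geq v_p(n!) + e$ and matches the upper bound. The choice of $k$ is engineered so that the harmonic difference and the binomial coefficient simultaneously cooperate.

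First, the sum $H_n - H_{n-k} = \sum_{j=(m-1)p^e+1}^{n} 1/j$ has $v_p = -e$: the only index in the summation range with $v_p(j) = e$ is $j = mp^e$ (since $(m-1)p^e$ lies below the range and $(m+1)p^e > n$); this single term has $p$-adic valuation $-e$ because $\gcd(m,p)=1$, and every other term has valuation strictly greater than $-e$. Second, by Kummer's theorem $v_p\binom{n}{k}$ equals the number of carries in the base-$p$ addition of $k$ and $n-k$. Here $n-k = (m-1)p^e$ has only a single nonzero digit $m-1$ at position $e$, while $k = p^e + (n \bmod p^e)$ places a $1$ at position $e$ atop the digits of $n \bmod p^e$; the sum at position $e$ is $1+(m-1)=m<p$ and the other columns are trivial, so no carry occurs and $v_p\binom{n}{k} = 0$. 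Combining, $v_p(c_{n,k}) = 0 + (-e) - v_p(n!) = -v_p(n!) - e$, as required.

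The main obstacle is that $k$ must be selected to defeat two simultaneous sources of $p$-adic cancellation: a partial harmonic sum can itself carry positive $p$-adic valuation (for instance $v_3(H_2) = 1$ and $v_5(H_4) = 2$), which would wipe out the expected $p^{-e}$ in $v_p(H_n - H_{n-k})$; and $\binom{n}{k}$ could be divisible by $p$, swallowing the very denominator one is trying to produce. Isolating the single largest multiple $mp^e$ of $p^e$ at the right end of the summation range kills the first issue, and the Kummer computation shows the same choice kills the second.
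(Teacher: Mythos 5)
Your proof is correct, and it follows the same broad strategy as the paper --- fix a prime $p$, bound the $p$-adic valuation of the denominator from above, and then exhibit one coefficient of $A_n(x)$ whose denominator realizes the full power $p^{v_p(n!)+e}$, using Kummer's theorem together with the observation that the relevant harmonic window contains exactly one multiple of $p^e$ --- but your witness coefficient is different, and this changes the character of the computation. The paper proves a lemma for the coefficient of index $p^k$ (for every prime power $p^k\leq n$): there the window $\{n-p^k+1,\dots,n\}$ contains a unique multiple of $p^k$, say of valuation $\alpha\geq k$, so the harmonic difference has valuation $-\alpha$, and Kummer's borrow count gives $v_p\binom{n}{p^k}=\alpha-k$, the two effects compensating to leave exactly $p^{-k}$. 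You instead take the index $k=n-(m-1)p^e$ with $m=\lfloor n/p^e\rfloor$, for which both factors behave cleanly: the window $\{(m-1)p^e+1,\dots,n\}$ contains $mp^e$ as its only multiple of $p^e$ and that term has valuation exactly $-e$ (no compensation needed), while the base-$p$ addition of $k$ and $(m-1)p^e$ produces no carries, so $\binom{n}{k}$ is coprime to $p$. A further advantage of your route is that you compute $v_p(c_{n,k})=-v_p(n!)-e$ for the full coefficient including the factor $1/n!$, so the passage from the denominator of $n!A_n(x)$ to that of $A_n(x)$ is automatic; in the paper this last step is handled by a separate, rather terse ``no cancellation upon dividing by $n!$'' remark. (Minor points you may wish to make explicit: $1\leq k\leq n$, which follows from $p^e\leq mp^e\leq n$, and the observation that no $j\leq n$ can have $v_p(j)>e$ since $p^{e+1}>n$ --- both are immediate.)
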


The proof of the theorem employs a preliminary divisibility result. 

\begin{Lem}
Let $p$ be a prime, $n \geq 1$, and $k \geq 0$ such that $p^{k} \leq n$.  Then the power of $p$ dividing the denominator of $\begin{displaystyle} \binom{n}{p^{k}} \left( H_{n}- H_{n-p^{k}} \right) \end{displaystyle}$ is $p^k$.
\end{Lem}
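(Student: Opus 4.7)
The plan is to show that the $p$-adic valuation $v_p$ of $\binom{n}{p^k}(H_n - H_{n-p^k})$ is exactly $-k$; this is equivalent to the statement that $p^k$ is the exact power of $p$ in the denominator when written in lowest terms. The computation splits naturally into valuing the harmonic difference and valuing the binomial coefficient separately, and then combining.

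First, I would rewrite
$$H_n - H_{n-p^k} = \sum_{j=n-p^k+1}^{n} \frac{1}{j},$$
a sum of reciprocals of $p^k$ consecutive integers. Among any $p^k$ consecutive integers there is a unique multiple of $p^k$, which must be $Mp^k$ with $M = \lfloor n/p^k \rfloor$. Every other $j$ in the window satisfies $v_p(j) \leq k-1$, hence $v_p(1/j) \geq -(k-1)$, whereas $v_p(1/(Mp^k)) = -k - v_p(M) \leq -k$. The minimum valuation is therefore uniquely attained, giving
$$v_p(H_n - H_{n-p^k}) = -k - v_p(M).$$

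Second, I would show $v_p\!\left(\binom{n}{p^k}\right) = v_p(M)$. By Kummer's theorem (or equivalently Legendre's formula applied to $n!/(p^k)!(n-p^k)!$), this valuation equals the number of carries when one adds $p^k$ to $n - p^k$ in base $p$. Writing $n = \sum_i n_i p^i$, two cases arise. If $n_k \geq 1$, there are no carries; simultaneously $\lfloor n/p^k \rfloor$ has nonzero units digit in base $p$, so both valuations vanish. If $n_k = n_{k+1} = \cdots = n_{k+j-1} = 0$ with $n_{k+j} \geq 1$ (such $j$ exists because $p^k \leq n$), then the subtraction $n - p^k$ borrows through positions $k, \ldots, k+j-1$, so adding $p^k$ back produces exactly $j$ carries; at the same time $v_p(\lfloor n/p^k \rfloor) = j$, matching.

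Multiplying the two contributions yields
$$v_p\!\left(\binom{n}{p^k}(H_n - H_{n-p^k})\right) = v_p(M) + (-k - v_p(M)) = -k,$$
as required. I expect the main technical point to be the identity $v_p(\binom{n}{p^k}) = v_p(\lfloor n/p^k \rfloor)$: it is ultimately a routine base-$p$ digit computation, but the case split on whether $n_k = 0$, together with tracking the length of the zero block, is where care is needed. The uniqueness of the $p^k$-divisible term in the harmonic window, which is what makes the minimum valuation attained exactly once rather than possibly canceling, is the other place where the hypothesis $p^k \le n$ is used implicitly (to ensure the window of $p^k$ positive integers lies inside $[1,n]$).
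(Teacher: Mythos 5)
Your proposal is correct and follows essentially the same route as the paper: both isolate the unique multiple of $p^{k}$ among the $p^{k}$ consecutive integers to evaluate the valuation of $H_{n}-H_{n-p^{k}}$, and both invoke Kummer's theorem for $\binom{n}{p^{k}}$ (your carry count when adding $p^{k}$ to $n-p^{k}$ is the paper's borrow count when subtracting $p^{k}$ from $n$, each equal to $v_{p}(\lfloor n/p^{k}\rfloor)$), then add the two valuations to get $-k$.
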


\begin{proof}
Observe that 
\begin{equation}
H_{n} - H_{n-p^{k}} = \sum_{i=0}^{p^{k}-1} \frac{1}{n-i}.
\label{sum-1}
\end{equation}
\noindent
In (\ref{sum-1}), the index $i$ ranges over a set of length $p^{k}$; 
therefore there is a unique index $i_{0}$ such that $p^{k}$ divides 
$n-i_{0}$, namely $i_0 = n - p^k \lfloor n/p^k \rfloor$. For $i \neq i_{0}$ 
write 
$n-i = p^{\alpha_{i}}\beta_{i}$  and 
$n-i_{0} = p^{\alpha}\beta$,
with $\alpha \geq k, \, \alpha_{i} \leq k-1$ and $p$ not dividing $\beta, 
\beta_{i}$. Therefore
\begin{eqnarray}
H_{n} - H_{n-p^{k}} =
\sum_{i \neq i_{0}} \frac{1}{p^{\alpha_{i}} \beta_{i}} 
+ \frac{1}{p^{\alpha} \beta} \nonumber 
\end{eqnarray}
\noindent
can be expressed as
\begin{equation}
H_{n} - H_{n-p^{k}}  =  
\frac{A}{p^{\gamma}B} + \frac{1}{p^{\alpha} \beta} \nonumber 
\end{equation}
\noindent 
for some $\gamma \leq k-1$ and $B$ not divisible by $p$. This gives
\begin{equation}
H_{n} - H_{n-p^{k}}  =  
 \frac{p^{\alpha - \gamma} A \beta + B}{p^{\alpha}B \beta}
\end{equation}
\noindent
and  it follows that the denominator of $H_{n}-H_{n-p^{k}}$ is divisible 
exactly by $p^{\alpha}$. 

We now determine the exponent of the highest power of $p$ dividing $\binom{n}{p^k}$, which according to Kummer's theorem \cite{kummer2} is the number of borrows involved in subtracting $p^k$ from $n$ in base $p$.  Let $n = n_l \cdots n_\alpha \cdots n_k \cdots n_0$ be the standard base-$p$ representation of $n$; then $n - i_0 = n_l \cdots n_\alpha 0 \cdots 0$.  Since $n_\alpha \neq 0$ and $n_j = 0$ for $k - 1 < j < \alpha$, there are $\alpha - k$ borrows when subtracting $p^k$ from $n$ in base $p$.  Therefore the power of $p$ dividing $\binom{n}{p^k}$ is $p^{\alpha - k}$, and the power of $p$ in $\binom{n}{p^k} (H_n - H_{n - p^k})$ is $p^{\alpha - k} \cdot p^{-\alpha} = p^{-k}$.
\end{proof}

\noindent
{\emph{Proof of Theorem}}.  Only the terms $1, 2, \dots, n$ appear as part of the denominators of 
coefficients of the polynomial $n! A_{n}(x)$. Thus, the common denominator is a 
divisor of $\lcm(1,2,\dots,n)$. The previous Lemma shows that every prime 
power $p^{k} \leq n$ appears. It follows that the 
denominator of $n! A_n(x)$ is
$\lcm(1,2,\dots,n)$.

Since every prime dividing $n!$ also divides $\lcm(1,2,\dots,n)$, there is
no cancellation with the numerator of $\sum_{j=1}^n \binom{n}{j} (H_n -
H_{n-j}) x^{j-1}$ when we divide by $n!$, so the denominator of $A_n(x)$
is $n! \lcm(1,2,\dots,n)$.

\begin{Cor}
The expression $\beta_{n}$ in (\ref{beta-def}) is now given by 
\begin{equation}
\beta_{n} = \frac{\lcm(1, 2, \dots, n)}
                 {\lcm(1, 2, \dots, n-1)}.
\label{lcm-1}
\end{equation}
\noindent
This gives (\ref{beta-form}).
\end{Cor}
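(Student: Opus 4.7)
The plan is a short two-step computation. First I would simply substitute the closed form $\alpha_n = n!\,\lcm(1,2,\dots,n)$ from the preceding theorem into the defining ratio (\ref{beta-def}):
\begin{equation*}
\beta_n \;=\; \frac{\alpha_n}{n\,\alpha_{n-1}} \;=\; \frac{n!\,\lcm(1,2,\dots,n)}{n\cdot(n-1)!\,\lcm(1,2,\dots,n-1)} \;=\; \frac{\lcm(1,2,\dots,n)}{\lcm(1,2,\dots,n-1)},
\end{equation*}
since the factor $n!/\bigl(n\cdot(n-1)!\bigr)$ equals $1$. This is exactly (\ref{lcm-1}).

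Second, to recover (\ref{beta-form}) I would pass to $p$-adic valuations. For any prime $p \le n$, $v_p\bigl(\lcm(1,2,\dots,n)\bigr) = \max\{\,v_p(k) : 1 \le k \le n\,\} = \lfloor \log_p n \rfloor$. Therefore
\begin{equation*}
v_p(\beta_n) \;=\; \lfloor \log_p n \rfloor - \lfloor \log_p(n-1) \rfloor,
\end{equation*}
which equals $1$ precisely when $n$ is itself a power of $p$, and $0$ otherwise. Since a given $n \ge 2$ can be a perfect power of at most one prime, at most one such valuation is nonzero, and $\beta_n$ is either that unique prime $p$ (when $n = p^k$) or $1$. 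This is (\ref{beta-form}).

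There is really no obstacle here: the first step is pure bookkeeping given the preceding theorem, and the second step is the standard valuation-theoretic description of $\lcm(1,\dots,n)$. The only point to flag carefully is that when $n = p^k$ for some prime $p$, no \emph{other} prime $q$ can simultaneously satisfy $n = q^\ell$, so the quotient has a single prime factor; this is what forces the case distinction in (\ref{beta-form}) to be clean rather than producing composite values of $\beta_n$.
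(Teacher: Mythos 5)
Your proof is correct and follows exactly the route the paper intends: substitute $\alpha_n = n!\,\lcm(1,2,\dots,n)$ into (\ref{beta-def}) to get (\ref{lcm-1}), then use the standard fact that $\lcm(1,\dots,n)/\lcm(1,\dots,n-1)$ picks up a factor $p$ precisely when $n$ is a power of the prime $p$. The paper leaves these steps implicit; your valuation computation $v_p(\lcm(1,\dots,n)) = \lfloor \log_p n\rfloor$ and the uniqueness remark simply make them explicit, with no gaps.
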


\section{The polynomial $A_{n}$ is logconcave} \label{sec-logconcave}
\setcounter{equation}{0}

A sequence of coefficients $\{ a_{0}, \, a_{1}, \, \dots, a_{n} \}$ 
is {\emph{unimodal}} is there is an index 
$j_{*}$ such that $a_{0} \leq a_{1} \leq \cdots \leq a_{j^{*}}$ and 
$a_{j_{*}} \geq a_{j^{*}+1} \geq \cdots \geq a_{n}$.  A polynomial is 
called unimodal if its sequence of coefficients is unimodal.  
The polynomial
is called {\em{logconcave}} if its coefficients satisfy 
$a_{j}^{2} \geq a_{j-1}a_{j+1}$. An elementary argument shows that logconcavity
implies unimodality \cite{wilf1}. Unimodal and logconcave sequences appear
frequently in algebra and combinatorics. The reader will find in 
\cite{brenti1994, stanley1989a} a survey of these results.

The logconcavity of the polynomial $B_{n}(x)$ is elementary; it simply 
corresponds to that of the binomial coefficients. The argument 
for $A_{n}(x)$ is established next. 

\begin{Thm}
The polynomial $A_{n}(x)$ is logconcave.
\end{Thm}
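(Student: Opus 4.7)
The plan is coefficientwise: writing $A_n(x) = \sum_{k=1}^n a_{n,k}\, x^{k-1}$ with $a_{n,k} = \frac{1}{n!}\binom{n}{k}\, d_k$ and $d_k := H_n - H_{n-k}$, logconcavity amounts to $a_{n,k}^2 \geq a_{n,k-1}\, a_{n,k+1}$ for $2 \leq k \leq n-1$. Since every $a_{n,k}$ is positive, I may divide and separate the two factors. The binomial piece contributes
\[
\frac{\binom{n}{k}^2}{\binom{n}{k-1}\binom{n}{k+1}} \;=\; \frac{(k+1)(n-k+1)}{k(n-k)} \;=\; 1 + \frac{n+1}{k(n-k)},
\]
and the whole argument will hinge on whether this excess $(n+1)/(k(n-k))$ is large enough to absorb the deficit coming from the harmonic part.

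Next I would compute $d_{k-1}\, d_{k+1}$ from the telescoping identities $d_k - d_{k-1} = \tfrac{1}{n-k+1}$ and $d_{k+1} - d_k = \tfrac{1}{n-k}$. A one-line calculation yields
\[
d_{k-1}\, d_{k+1} \;=\; d_k^2 + \frac{d_k - 1}{(n-k)(n-k+1)}.
\]
A key observation is that this identity already \emph{fails} to deliver logconcavity of $(d_k)$ on its own as soon as $d_k > 1$, which certainly happens for large $k$, so the binomial excess really does have work to do. Substituting the identity into the reduced inequality and clearing the positive denominators turns the claim into the statement that the quadratic
\[
Q(t) \;:=\; (n+1)(n-k+1)\, t^2 - k\, t + k
\]
is nonnegative at $t = d_k$.

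Since $Q$ has positive leading coefficient, this is settled by a discriminant check: the discriminant $k^2 - 4k(n+1)(n-k+1)$ is strictly negative for every $2 \leq k \leq n-1$, because $n-k+1 \geq 2$ forces $4(n+1)(n-k+1) \geq 8(n+1) > k$ with considerable room to spare. Therefore $Q$ is strictly positive on the whole real line, and logconcavity follows. The only genuine obstacle here is conceptual rather than computational: one must first notice that the sequence $(d_k)$ is not logconcave in its own right, so the harmonic and binomial pieces have to be combined before any clean inequality can be extracted.
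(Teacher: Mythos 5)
Your proposal is correct and follows essentially the same route as the paper: after dividing out the binomial ratio and using the telescoping relations for $H_n-H_{n-k}$, both arguments reduce logconcavity to the nonnegativity of a quadratic in $d_k=H_n-H_{n-k}$ (your $Q(t)=(n+1)(n-k+1)t^2-kt+k$ is just $k$ times the paper's quadratic) and conclude by checking that its discriminant is negative. The computations and the discriminant bound are accurate, so nothing needs to be fixed.
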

\begin{proof}
The result is equivalent to the inequality
\begin{equation}
\binom{n}{j}^{2} \left( H_{n} - H_{n-j} \right)^{2} \geq 
\binom{n}{j-1} \binom{n}{j+1} \left( H_{n}-H_{n-j+1} \right) 
 \left( H_{n}-H_{n-j-1} \right).
\label{log-con1}
\end{equation}
\noindent
Introduce the notation
\begin{equation}
f_{n}(j) = H_{n}-H_{n-j} = \sum_{i=0}^{j-1} \frac{1}{n-i};
\end{equation}
\noindent
then (\ref{log-con1}) is equivalent to 
\begin{equation}
\frac{(n+1)(n-j+1)}{j} f_{n}^{2}(j) - f_{n}(j) + 1 \geq 0. 
\end{equation}
\noindent
This is a quadratic inequality with discriminant 
\begin{equation}
\frac{j(4n+5)-4(1+n)^{2}}{j} \leq 
\frac{n(4n+5)-4(1+n)^{2}}{j}  = - \frac{3n+4}{j} < 0,
\end{equation}
\noindent
that gives the logconcavity of $A_{n}$. 
\end{proof}

\begin{Note}
There are many instances where polynomials appearing in connection with 
the evaluation of integrals are logconcave. For instance, the polynomial
\begin{equation}
P_{m}(a) = \sum_{l=0}^{m} d_{l,m} a^{l} 
\label{polypm-def}
\end{equation}
\noindent
with
\begin{equation}
d_{l,m} = 2^{-2m} \sum_{k=l}^{m} 2^{k} \binom{2m-2k}{m-k} \binom{m+k}{m}
\binom{k}{l},
\label{def-dlm}
\end{equation}
\noindent 
appears in the formula
\begin{equation}
\int_{0}^{\infty} \frac{dx}{(x^{4}+2ax^{2} + 1)^{m+1}} = \frac{\pi}{2} 
\frac{P_{m}(a)}{[2(a+1)]^{m + \tfrac{1}{2}}}.
\end{equation}
\noindent 
The reader will find in \cite{amram} different proofs of these formulas. The 
unimodality of $P_{m}(a)$ was established in \cite{bomouni1} and its 
logconcavity, in \cite{kauers-paule}. A direct proof of this result 
also appears in \cite{chen2009a}.  \\

The logconcavity of a sequence $\{a_{j}: \, 1 \leq j \leq n  \}$ 
can be expressed in terms of the 
operator $\mathfrak{L}$ defined by $\mathfrak{L}( \{ a_{j} \} ) = 
\{ a_{j}^{2} - a_{j-1}a_{j+1} \}$, where $a_{j} = 0 $ if $j<0$ or $j > n$. 
Thus, a positive sequence $\mathbf{a}$ is logconcave if 
$\mathfrak{L}(\mathbf{a})$ is positive. A sequence is called $r$-logconcave if 
$\mathfrak{L}^{(s)}(\mathbf{a})$ is positive for $0 \leq s \leq r$. It is 
called {\emph{infinite logconcave}} if it is $r$-logconcave for any $r \in 
\mathbb{N}$. 
\end{Note}

The next conjecture is based on extensive symbolic calculations.

\begin{Conj}
For every $n \in \mathbb{N}$, the polynomial $A_{n}(x)$ is infinite logconcave.
\end{Conj}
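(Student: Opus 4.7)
The strategy is to try, in order of increasing sophistication, the standard techniques for establishing infinite logconcavity, identify which fail for $A_{n}(x)$, and pinpoint the remaining obstacle. First I would attempt to invoke Br\"anden's theorem: if $p(x) = \sum a_{k} x^{k}$ with nonnegative coefficients has only real roots, then so does $\sum (a_{k}^{2} - a_{k-1} a_{k+1}) x^{k}$, and iterating yields infinite logconcavity immediately. This approach fails at the first step, since $A_{3}(x) = \frac{1}{36}(11 x^{2} + 15 x + 6)$ has discriminant $225 - 264 = -39 < 0$, so $A_{n}$ is not in general real-rooted.

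Next I would try a sufficient condition of Craven--Csordas / McNamara--Sagan type: if the strengthened inequality $a_{j}^{2} \geq \tfrac{3 + \sqrt{5}}{2}\, a_{j-1} a_{j+1}$ holds for every $j$, then infinite logconcavity follows. By the reduction carried out in the preceding theorem, this becomes a quadratic inequality in $g = H_{n} - H_{n-j}$, obtained from the one proved there by replacing the constant $1$ on the right by $\tfrac{3+\sqrt{5}}{2}$ and multiplying the quadratic term by $j(n-j)/[(j+1)(n-j+1)]$. However, explicit computation on the coefficients $a_{5,j}$ (whose ratios $a_{j-1}a_{j+1}/a_{j}^{2}$ already reach approximately $0.47$, exceeding the required bound $2/(3+\sqrt{5}) \approx 0.382$) shows the strengthened inequality fails, so this path is also closed.

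The remaining strategy is a direct inductive verification of $r$-logconcavity for each $r$ separately. For $r = 2$, the plan is to use (\ref{A-def}) to compute $b_{n,j} := a_{n,j}^{2} - a_{n,j-1} a_{n,j+1}$ in closed form in harmonic numbers, then reduce $b_{n,j}^{2} \geq b_{n,j-1} b_{n,j+1}$ to a polynomial inequality in $n$, $j$, and $g = H_{n} - H_{n-j}$, dispatched by a discriminant-style argument analogous to the one used for single logconcavity. I expect this to succeed, if laboriously, for $r = 2$ and perhaps $r = 3$. The essential obstacle, however, is that $\mathfrak{L}$ acts nonlinearly on the harmonic-number structure of the $a_{n,k}$: after one application the coefficients lose their compact form, and the degree of the polynomial inequalities to be verified grows rapidly with $r$. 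A uniform argument would almost certainly require identifying a positivity or real-rootedness invariant, either of $A_{n}$ itself or of some suitable transform, that is preserved by every application of $\mathfrak{L}$; no such invariant is visible from the formulas derived above, and this is where I expect the heart of the difficulty to lie.
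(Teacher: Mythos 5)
Your proposal does not prove the statement, and you should be aware that the paper does not either: this is stated as a conjecture, supported only by symbolic computation, and the strongest result the authors obtain in this direction is that $A_n(x)$ is infinite logconcave for all $n \leq 300$. So there is no "paper proof" to match; what you have written is an accurate survey of why the known techniques fall short. Your two negative observations are correct and agree with the paper's discussion: $A_3(x) = \tfrac{1}{36}(11x^2+15x+6)$ indeed has non-real zeros (discriminant $-39$), so Br\"and\'en's real-rootedness theorem is unavailable, and the coefficients of $A_5(x)$ violate the $r_0$-factor condition with $r_0 = (3+\sqrt{5})/2$ (the ratio $a_{j-1}a_{j+1}/a_j^2$ does exceed $2/(3+\sqrt{5}) \approx 0.382$), so the McNamara--Sagan criterion cannot be applied directly to $A_n$ itself.

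The one substantive point you miss is how the paper extracts its partial result from that same criterion. The McNamara--Sagan method is not applied to the original coefficient sequence: one computes $\mathfrak{L}^i(a_{n,j})$ for $i = 1, 2, \dots, M$, checks nonnegativity of each iterate, and then checks the $r_0$-factor condition on $\mathfrak{L}^M(a_{n,j})$; since $r_0$-factor logconcavity is preserved by $\mathfrak{L}$ (Lemma \ref{rlog}), this certifies infinite logconcavity of the original sequence for that fixed $n$. Your observation that $A_5$ fails the $r_0$-factor test therefore does not close that path for fixed $n$ --- it only shows one cannot take $M = 0$ --- and this is exactly how the authors verify the conjecture up to $n \leq 300$. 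For the full conjecture your diagnosis of the obstacle is essentially the authors' own: $\mathfrak{L}$ destroys the harmonic-number closed form, no real-rootedness or other $\mathfrak{L}$-invariant positivity structure is visible, and a uniform-in-$n$ argument remains open. If you want to contribute something beyond the paper, the realistic targets are either a proof that $\mathfrak{L}^M(a_{n,j})$ is eventually $r_0$-factor logconcave with $M$ bounded uniformly in $n$, or a closed-form discriminant-style proof of $2$-logconcavity along the lines you sketch; neither is carried out in the paper.
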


The question of logconcavity of a sequence $\{ a_{k} \}$ 
is intimately connected with the location of the zeros of its generating
polynomial $P(x) = a_{0} + a_{1}x + \dots + a_{n}x^{n}$. Newton showed 
that if the zeros of $P$ are real and negative, then $\{ a_{k} \}$ is 
logconcave. It turns out that the zeros of the polynomial $P_{m}(a)$, 
generated by the sequence 
$\{ d_{l,m} \}$ in (\ref{def-dlm}), do not satisfy this condition. The 
quest for a proof of the logconcavity of 
$\{d_{l,m} \}$ lead one of the authors to study the operator $\mathfrak{L}$. 
Experimental data suggested that this sequence was infinite
logconcave. A similar conjecture on the binomial 
coefficients was proposed as a testing problem. 

The operator $\mathfrak{L}$ {\emph{does not}} 
preserve logconcavity. In order to bypass this difficulty, McNamara
and Sagan \cite{mcnamara1}
introduce a remarkable method that gives infinite logconcavity. Given 
$r \in \mathbb{R}$
a sequence $\{a_k\}$ is called $r$-factor logconcave if
$a_k^2 \geq r a_{k+1}a_{k-1}$. The 
next lemma appears in \cite{mcnamara1}.

\begin{Lem}
\label{rlog}
Let $\{a_k\}$ be a non-negative sequence and let $r_0 = (3 + \sqrt{5})/2$. 
Then $\{a_k\}$ being $r_0$-factor logconcave implies that 
$\mathfrak{L}(\{a_k\})$ is too. So in this case $\{a_k\}$ is infinite 
logconcave.
\end{Lem}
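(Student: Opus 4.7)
The plan is to establish the inequality $b_k^2 \geq r_0 \, b_{k-1} b_{k+1}$ directly, where $b_k = a_k^2 - a_{k-1} a_{k+1}$ denotes the $k$-th entry of $\mathfrak{L}(\{a_k\})$, by applying the hypothesis at three consecutive indices. The specific value $r_0 = (3+\sqrt{5})/2$ should enter through the algebraic identity $r_0^2 - 3 r_0 + 1 = 0$, equivalently $(r_0 - 1)^2 = r_0$, which makes $r_0$ the square of the golden ratio. Once the inequality is proved, $\mathfrak{L}(\{a_k\})$ is itself non-negative and $r_0$-factor logconcave, so infinite logconcavity of $\{a_k\}$ follows by a trivial induction on the iterate $s$.

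The main step uses an asymmetric pair of bounds. For the left-hand side, the hypothesis at index $k$ gives
\begin{equation*}
a_k^2 - a_{k-1} a_{k+1} \geq (r_0 - 1)\, a_{k-1} a_{k+1},
\end{equation*}
so that $b_k^2 \geq (r_0-1)^2 a_{k-1}^2 a_{k+1}^2$ after squaring. For the right-hand side, the hypothesis at indices $k \pm 1$ together with $r_0 > 1$ ensures $b_{k-1}, b_{k+1} \geq 0$, and non-negativity of the original sequence then gives the crude upper bounds $b_{k-1} \leq a_{k-1}^2$ and $b_{k+1} \leq a_{k+1}^2$. Multiplying these two non-negative inequalities yields $r_0 \, b_{k-1} b_{k+1} \leq r_0 \, a_{k-1}^2 a_{k+1}^2$.

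The whole problem then collapses to the single numerical condition $(r_0 - 1)^2 \geq r_0$, which holds with equality at $r_0 = (3 + \sqrt{5})/2$; this confirms the claimed $r_0$-factor logconcavity of $\mathfrak{L}(\{a_k\})$, and iteration gives $\mathfrak{L}^{(s)}(\{a_k\}) \geq 0$ for every $s \geq 0$. The main obstacle I anticipate is identifying the correct asymmetry: a symmetric use of the $r_0$-factor hypothesis on both sides loses a factor and forces a larger constant, whereas recognizing that the right-hand side is already dominated by the plain squares $a_{k\pm 1}^2$ leaves exactly enough slack for the quadratic $r^2 - 3r + 1 \geq 0$ to pin down $(3+\sqrt{5})/2$ as the optimal threshold.
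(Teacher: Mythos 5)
Your argument is correct: since $0 \leq b_{k\pm 1} \leq a_{k\pm 1}^2$ (non-negativity of the $a$'s plus the hypothesis at $k\pm 1$) and $b_k \geq (r_0-1)a_{k-1}a_{k+1} \geq 0$ with $(r_0-1)^2 = r_0$, the chain $b_k^2 \geq r_0\, a_{k-1}^2 a_{k+1}^2 \geq r_0\, b_{k-1}b_{k+1}$ holds, and non-negativity of $\mathfrak{L}(\{a_k\})$ together with induction on the iterate gives infinite logconcavity. Note that the paper itself offers no proof of this lemma --- it is quoted from McNamara and Sagan --- and your argument is essentially the one in that reference, with the same asymmetric use of the hypothesis (full strength at index $k$, only non-negativity and the trivial bound $b_{k\pm 1}\leq a_{k\pm 1}^2$ at the neighbors).
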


This result can be used to check that a sequence $a_{n,j}$ is infinite 
logconcave for a specific fixed value of $n$. It was used in 
\cite{mcnamara1} to show that the sequence of binomial coefficients $\{\binom{n}{k}: 
0 \leq k \leq n \}$ is infinite logconcave for $n \leq 1450$. Their 
approach is remarkably simple: calculate 
$\mathfrak{L}^i(a_{n,j})$ for $i$ up to some bound $M$.  If the 
sequences $\mathfrak{L}(a_{n,j})$, $\mathfrak{L}^2(a_{n,j}), \dots, 
\mathfrak{L}^{M}(a_{n,j})$ are non-negative and $\mathfrak{L}^M(a_{n,j})$ 
is $r_0$-factor logconcave, then Lemma \ref{rlog} implies 
that $\{a_{n,j}\}_{j=0}^n$ is infinite logconcave.  Employing this 
technique we have verified
the following result.

\begin{Thm}
The polynomial $A_n(x)$ is infinite logconcave for all $n\leq 300$.
\end{Thm}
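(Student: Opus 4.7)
The plan is to verify this computationally for each $n$ in the range $1 \le n \le 300$, exactly along the lines indicated in the paragraph preceding the statement. For a fixed $n$, set
\begin{equation}
a_{n,j} = \binom{n}{j}\bigl(H_n - H_{n-j}\bigr), \qquad 1 \le j \le n,
\end{equation}
so that the coefficients of $n!\,A_n(x)$ form the positive sequence $\{a_{n,1},\dots,a_{n,n}\}$. Since multiplying every term of a sequence by the same positive constant preserves all the relevant inequalities (the operator $\mathfrak{L}$ scales homogeneously, and $r$-factor logconcavity is invariant under positive rescaling), showing that $\{a_{n,j}\}_{j=1}^{n}$ is infinite logconcave is equivalent to the statement for the coefficients of $A_n(x)$ itself.

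Next I would fix an iteration bound $M$ and compute the successive sequences $\mathfrak{L}(a_{n,j})$, $\mathfrak{L}^{2}(a_{n,j})$, $\ldots$, $\mathfrak{L}^{M}(a_{n,j})$ in exact rational arithmetic, boundary entries being padded with zeros as in the definition of $\mathfrak{L}$. At each stage I would verify that the resulting sequence is strictly positive in its interior, and at the final stage check the stronger inequality
\begin{equation}
\bigl(\mathfrak{L}^{M}(a_{n,\cdot})_j\bigr)^{2} \;\ge\; r_0 \;\mathfrak{L}^{M}(a_{n,\cdot})_{j-1}\;\mathfrak{L}^{M}(a_{n,\cdot})_{j+1},
\end{equation}
where $r_0 = (3+\sqrt{5})/2$. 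By Lemma \ref{rlog}, such an $M$ certifies infinite logconcavity of the original sequence, and hence of $A_n(x)$. One would start with a small value such as $M=2$ and increase it only for those $n$ where the $r_0$-factor test fails; empirically $M$ stays small throughout the range.

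The main obstacle is purely computational rather than conceptual: the coefficients $a_{n,j}$ are rationals with denominators related to $\lcm(1,2,\dots,n)$, and repeated application of $\mathfrak{L}$ produces entries whose numerators grow quickly in size, so the verification must be done in exact arithmetic (floating point would be unsafe near the boundary between passing and failing the $r_0$-test). This is manageable in a computer algebra system up to $n=300$, and the positivity checks and $r_0$-factor comparison at each iteration are elementary rational comparisons; running the procedure for every $n \le 300$ yields the claimed range. No step requires a new idea beyond the McNamara--Sagan criterion recalled in Lemma \ref{rlog}.
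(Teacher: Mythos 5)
Your proposal is correct and follows essentially the same route as the paper: the authors also certify each $n \le 300$ by iterating $\mathfrak{L}$ on the coefficient sequence, checking non-negativity at each stage, and invoking the McNamara--Sagan $r_0$-factor criterion (Lemma \ref{rlog}) at the final iterate. Your added remark that rescaling by $n!$ is harmless (since $\mathfrak{L}$ is homogeneous and the $r_0$-factor condition is scale invariant) is a correct and useful clarification, but not a different method.
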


Extending the conjecture on the infinite logconcavity of the 
binomial coefficients, Stanley \cite{stanley2008}, McNamara-Sagan 
\cite{mcnamara1} and Fisk \cite{fisk-1} conjectured the next result. This 
was established by Petter Br\"{a}nd\'{e}n in \cite{branden1}.

\begin{Thm}
Suppose that the polynomial $\begin{displaystyle} \sum_{k=0}^{n} a_{k}x^{k}
\end{displaystyle}$ has only real and negative zeros. Then so does the 
polynomial
\begin{equation}
\sum_{k=0}^{n} \left( a_{k}^{2} - a_{k-1}a_{k+1} \right)z^{k}, 
\quad \text{ where }a_{-1} = a_{n+1} = 0. 
\end{equation}
\noindent
In particular, the sequence $\{a_{k}: \, 0 \leq k \leq n \}$ is infinite
logconcave.
\end{Thm}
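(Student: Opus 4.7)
The plan is to view the $\mathfrak{L}$ operator through the lens of two-variable real-stable polynomials, in the spirit of the Borcea-Br\"and\'en framework for stability preservers. By a density argument (perturb $P$ slightly so that its zeros become distinct and negative, and pass to the limit via Hurwitz's theorem on zero continuity), one may assume that $P(x) = \sum_{k=0}^{n} a_{k} x^{k}$ has only simple negative zeros, so in particular every $a_{k} > 0$.

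The key algebraic observation is that each coefficient of $\mathfrak{L}(P)$ is a $2\times 2$ Toeplitz-type minor:
$$a_{k}^{2} - a_{k-1} a_{k+1} = [x^{k} y^{k}] F(x,y) - [x^{k-1} y^{k+1}] F(x,y),$$
where $F(x,y) := P(x)P(y)$. Since $P$ is real-rooted with negative zeros, $F$ is a real-stable polynomial in two variables. Therefore $\mathfrak{L}(P)(z) = \Phi(F)(z)$ for a fixed linear operator $\Phi$ that extracts the indicated combination of two adjacent diagonals of any polynomial in $(x,y)$.

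The central step is to show that $\Phi$ maps real-stable polynomials in $(x,y)$ to polynomials in $z$ with only real non-positive roots. I would invoke the Borcea-Br\"and\'en transfer theorem, which reduces the problem to verifying real stability of the symbol polynomial $\Phi\bigl((x+s)^{n}(y+t)^{n}\bigr)(z)$ as a polynomial in the three variables $(s,t,z)$. A direct expansion yields a symbol of the form
$$\sum_{k} \binom{n}{k}^{2} (stz)^{k} \; - \; \sum_{k} \binom{n}{k-1}\binom{n}{k+1}\, s^{k-1} t^{k+1} z^{k},$$
and the task becomes to recognize this (after a suitable linear change of variable) as real stable, for example by rewriting it as a Hadamard-type product or specialization of a known stable family such as a Jacobi-type generating polynomial.

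The main obstacle is precisely this verification of stability of the symbol: the minus sign coming from the Toeplitz minor makes stability non-obvious, and Br\"and\'en's technical contribution is to supply an algebraic identity that rewrites the symbol as a manifestly stable combination. Once stability of the symbol is in hand, the transfer theorem applied to $F = P(x)P(y)$ immediately produces $\mathfrak{L}(P)$ as a real-rooted polynomial in $z$, and positivity of its coefficients (which itself follows from logconcavity of $\{a_{k}\}$, a consequence of Newton's inequalities) forces the zeros to be strictly negative. The infinite logconcavity claim then follows by iterating the result.
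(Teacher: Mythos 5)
First, a point of orientation: the paper does not prove this theorem at all; it is quoted from Br\"{a}nd\'{e}n \cite{branden1} (where it settles the Stanley--McNamara--Sagan--Fisk conjecture), so there is no internal argument to compare yours against. Judged on its own, your proposal is a strategy outline rather than a proof, and the gap sits exactly at its center. The reduction you set up is fine as far as it goes: $a_k^2-a_{k-1}a_{k+1}$ is indeed the difference of the coefficients of $x^ky^k$ and $x^{k-1}y^{k+1}$ in $F(x,y)=P(x)P(y)$, and $F$ is real stable when $P$ has only real negative zeros. But the decisive claim --- that the diagonal-difference operator $\Phi$ carries real-stable bivariate polynomials to real-rooted univariate ones, or equivalently that the Borcea--Br\"{a}nd\'{e}n symbol $\Phi\bigl((x+s)^n(y+t)^n\bigr)$ is real stable in $(s,t,z)$ --- is precisely what you do not establish. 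You write the symbol out, observe that the minus sign makes its stability ``non-obvious,'' and then defer the verification to ``an algebraic identity'' that Br\"{a}nd\'{e}n is said to supply. That identity \emph{is} the theorem; leaving it as a black box means the proof has not been given.

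There is also a structural hazard in the route itself. In the Borcea--Br\"{a}nd\'{e}n characterization, stability of the symbol is necessary as well as sufficient for $\Phi$ to preserve stability on \emph{all} inputs of the given bidegree. Your plan therefore commits you to the strictly stronger statement that $\Phi$ sends every real-stable $F(x,y)$ of bidegree $(n,n)$ (with the relevant positivity) to a real-rooted polynomial, whereas the theorem only concerns the special diagonal inputs $F=P(x)P(y)$. You give no evidence for the stronger statement, and if it fails the transfer-theorem approach collapses even though the theorem is true; one would then be forced to exploit the product structure of $P(x)P(y)$ directly, which is in the spirit of what Br\"{a}nd\'{e}n actually does. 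Minor points --- the Hurwitz perturbation argument, the positivity of the transformed coefficients via Newton's inequalities, and the iteration to get infinite logconcavity --- are all fine, but they are the easy outer layer around a missing core.
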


The theorem established the fact that the binomial coefficients 
$\{ \binom{n}{k}: \, 0 \leq k \leq n \}$ are infinite 
logconcave for every $n \in \mathbb{N}$.  On the other hand, examples of
sequences conjectured to be infinite logconcave have 
generating polynomials with non-real zeros. This holds for $P_{m}(a)$ in 
(\ref{polypm-def}) as well as the polynomial $A_{n}(x)$ considered here.

\section{An extension} \label{sec-extension}
\setcounter{equation}{0}

The expression for the iterated integrals described here extends to 
iterated integrals obtained by 
\begin{eqnarray}
f_{0,j}(x) & = & \ln^{j}(1+x) \label{seq-j} \\
f_{n,j}(x) & = & \int_{0}^{x} f_{n-1,j}(t) \, dt. \nonumber
\end{eqnarray}

\noindent
Symbolic computation gives
\begin{eqnarray}
f_{1,2}(x) & = & 2x - 2(x+1)u + (x+1)u^{2} \nonumber \\
f_{2,2}(x) & = & \frac{1}{4}x(7x+6) - \frac{3}{2}(x+1)^{2}u + 
\frac{1}{2}(x+1)^{2}u^{2} \nonumber \\
f_{3,2}(x) & = & \frac{1}{108}x(85x^{2}+147x+66) - 
\frac{11}{18}(x+1)^{3}u + \frac{1}{6}(x+1)^{3}u^{2} \nonumber 
\nonumber
\end{eqnarray}
where $u = \ln(1+x)$.  The structure of $f_{n,j}(x)$ is provided below.

\begin{Thm}
\label{form-1}
For $n \geq 0$ and $j \geq 1$, there are polynomials $A_{n,j}(x)$ and $B_{n,k,j}(x)$ such that 
\begin{equation}
f_{n,j}(x) = A_{n,j}(x) + \sum_{k=1}^{j} B_{n,k,j}(x) \ln^{k}(1+x). 
\label{form-f}
\end{equation}
\noindent
Moreover, $B_{n,k,j}(x) = b_{n,k,j}(1+x)^{n}$, for some $b_{n,k,j} \in 
\mathbb{Q}$.
\end{Thm}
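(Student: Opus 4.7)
The plan is induction on $n$, with $j$ held fixed. For $n=0$, we have $f_{0,j}(x) = \ln^j(1+x)$, which already has the stated form with $A_{0,j} \equiv 0$, $b_{0,j,j} = 1$, and $b_{0,k,j}=0$ for $k<j$, since $(1+x)^0 = 1$.

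For the inductive step, assume $f_{n-1,j}(t) = A_{n-1,j}(t) + \sum_{k=1}^{j} b_{n-1,k,j}(1+t)^{n-1}\ln^k(1+t)$. Then $\int_0^x A_{n-1,j}(t)\,dt$ is a polynomial in $x$ and may be absorbed into $A_{n,j}$. The substantial work lies in evaluating
\begin{equation*}
I_k(x) := \int_0^x (1+t)^{n-1}\ln^k(1+t)\,dt
\end{equation*}
for $1 \leq k \leq j$. Integration by parts with $u = \ln^k(1+t)$ and $dv = (1+t)^{n-1}\,dt$ gives the recurrence
\begin{equation*}
I_k(x) = \frac{(1+x)^n}{n}\ln^k(1+x) - \frac{k}{n}\,I_{k-1}(x), \qquad I_0(x) = \frac{(1+x)^n - 1}{n}.
\end{equation*}
A sub-induction on $k$ then yields $I_k(x) = (1+x)^n \sum_{\ell=0}^{k} c_{k,\ell}\ln^\ell(1+x) - c_{k,0}$ with $c_{k,\ell} \in \mathbb{Q}$. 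Collecting: the $\ell \geq 1$ contributions combine to $\sum_{\ell=1}^{j} b_{n,\ell,j}(1+x)^n \ln^\ell(1+x)$ where $b_{n,\ell,j} = \sum_{k=\ell}^{j} b_{n-1,k,j}\,c_{k,\ell} \in \mathbb{Q}$, while the $\ell = 0$ pieces $c_{k,0}(1+x)^n$ and the constants $-c_{k,0}$ are polynomials in $x$ and feed into $A_{n,j}(x)$.

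I do not expect a genuine obstacle. The structural claim that every logarithmic term in $f_{n,j}$ is precisely of the form $(\text{rational}) \cdot (1+x)^n \ln^k(1+x)$ is forced by the observation that antidifferentiating $(1+t)^{n-1}$ produces $(1+t)^n/n$ without any lower power of $(1+t)$ and without generating new logarithms, so each integration by parts lowers the power of $\ln(1+t)$ by one while preserving the factor $(1+t)^n$.
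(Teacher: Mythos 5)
Your proof is correct, and it takes a noticeably different route from the paper on the key point. The paper splits the theorem into two steps: it first shows that $f_{n,j}$ has the stated form for \emph{some} polynomials $A_{n,j}$, $B_{n,k,j}$ (using the same integration-by-parts observation you use, namely that an antiderivative of $s^{r}\ln^{k}s$ is $s^{r+1}$ times a polynomial in $\ln s$), and then, to pin down $B_{n,k,j}(x)=b_{n,k,j}(1+x)^{n}$, it differentiates the ansatz to get the system of differential recurrences $(1+x)B_{n,k,j}'(x)+(k+1)B_{n,k+1,j}(x)=(1+x)B_{n-1,k,j}(x)$, $B_{n,j,j}'=B_{n-1,j,j}$, proves $B_{n,k,j}(-1)=0$ by induction on $k$, and concludes from the recurrences that each $B_{n,k,j}$ is a rational multiple of $(1+x)^{n}$. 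You instead strengthen the induction hypothesis on $n$ to already include $B_{n-1,k,j}=b_{n-1,k,j}(1+t)^{n-1}$ and compute $\int_0^x(1+t)^{n-1}\ln^k(1+t)\,dt$ explicitly via the recurrence $I_k=\tfrac{(1+x)^n}{n}\ln^k(1+x)-\tfrac{k}{n}I_{k-1}$, so both the existence of the form and the $(1+x)^{n}$ structure come out in a single pass; the observation that the antiderivative of $(1+t)^{n-1}$ is exactly $(1+t)^{n}/n$ (no lower powers, no new logarithms) is precisely what makes this work, and your bookkeeping of the $\ell=0$ terms into $A_{n,j}$ is sound. What the paper's less direct argument buys is the recurrences themselves: the relations \eqref{eqn-a} and \eqref{b recurrence} extracted in its proof are reused immediately afterwards to derive the closed forms $b_{n,k,j}=\tfrac{(-1)^{j-k}j!}{n!\,k!}H_{n,j-k}$ and the explicit polynomial $A_{n,j}(x)$, whereas your recurrence $b_{n,\ell,j}=\sum_{k=\ell}^{j}b_{n-1,k,j}c_{k,\ell}$ (with $c_{k,\ell}$ depending on $n$) is less convenient for that purpose, though perfectly adequate for the theorem as stated.
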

\begin{proof}
In the first step, it is shown that $f_{n,j}$ has the stated form, for some 
polynomials $A_{n,j}, B_{n,k,j}$. This is elementary: integration 
by parts shows that, for $r, \, k \in \mathbb{N}$,
\begin{equation}
\int s^{r} \ln^{k}s \, ds = P( \ln s) s^{r+1},
\end{equation}
\noindent
where $P$ is a polynomial of degree $k$. Induction on $n$ and
(\ref{seq-j}), give the result.

The next is to prove that $B_{n,k,j}(x)$ is a multiple of $(1+x)^{n}$. 
In order to achieve this, replace (\ref{form-f}) in (\ref{seq-j}) to 
obtain
\begin{eqnarray}
(1+x) A_{n,j}'(x) + B_{n,1,j}(x)  & = & (1+x) A_{n-1,j}(x) \label{eqn-a} \\
(1+x)B_{n,k,j}'(x) + (k+1)B_{n,k+1,j}(x) & = & 
(1+x) B_{n-1,k,j}(x) \quad \text{ for } 1 \leq k \leq j-1, \nonumber \\
B_{n,j,j}'(x) & = & B_{n-1,j,j}(x). \nonumber
\end{eqnarray}

Fix $n$ and $j$ and use induction on $k$ to show $B_{n,k,j}(-1) = 0$. 
The first equation in (\ref{eqn-a}) gives $B_{n,1,j}(-1)=0$ and the 
second one gives $B_{n,k,j}(-1) = 0$, using induction on $k$. The 
fact that $B_{n,k,j}(x)$ is a power 
of $(x+1)^{n}$ now follows directly from the recurrence (\ref{eqn-a}). Indeed,
\begin{equation}
B_{n,k+1,j}(x) = \frac{1+x}{1+k}B_{n-1,k,j}(x) - 
\frac{1+x}{1+k}B_{n,k,j}'(x)
\end{equation}
\noindent
gives the result for $1 \leq k \leq j-1$. The case $k=j$ is settled with 
$B_{n,j,j}'(x) =  B_{n-1,j,j}(x)$, where the vanishing of $B_{n,j,j}(x)$ at
$x=-1$ adjusts the constant of integration. Therefore
\begin{equation}
B_{n,k,j}(x) = b_{n,k,j}(1+x)^{n}
\end{equation}
\noindent
for some $b_{n,k,j}$ to be determined. 

For $n \geq 1$ and $j \geq 1$, the coefficients $b_{n,k,j}$ satisfy the equations
\begin{eqnarray}\label{b recurrence}
b_{0,j,j} & = & 1 \label{recu-b} \\
b_{0,k,j} & = & 0  \text{ for } 0 \leq k \leq j-1 \nonumber \\
b_{n,j,j} & = & \frac{1}{n} b_{n-1,j,j} \nonumber \\
b_{n,k,j} & = & -\frac{k + 1}{n} b_{n,k+1,j} + \frac{1}{n} b_{n-1,k,j}  \text{ for }  0 \leq k \leq j-1. \nonumber
\end{eqnarray}

The third recurrence in (\ref{recu-b}) can be solved directly to obtain
$b_{n,j,j} = 1/n!$. 
\end{proof}

To write $b_{n,k,j}$ more explicitly, let
\begin{equation}
	H_{n,m} = \sum_{1 \leq a_1 \leq a_2 \leq \cdots \leq a_m \leq n} 
\frac{1}{a_1 a_2 \cdots a_m}
\end{equation}
\noindent
for $n \geq 0$ and $m \geq 0$.  In other words, the sum is over all 
nondecreasing $m$-tuples with entries from $\{1, 2, \dots, n\}$.  $H_{n,m}$ 
generalizes the harmonic number $H_n = H_{n,1}$.  For $m = 0$ the product 
is empty, so $H_{n,0} = 1$ for $n \geq 0$.  For $n = 0$ the sum is empty 
unless $m = 0$, so $H_{0,m} = 0$ for $m \geq 1$.

For $n \geq 1$ and $m \geq 1$, we have 
$H_{n,m} = \frac{1}{n} H_{n,m-1} + H_{n-1,m}$ by breaking up the $m$-tuples 
in $H_{n,m}$ according to whether $a_m = n$ or not.  It follows that the 
proposed expression for $b_{n,k,j}$ satisfies
\begin{equation}
	b_{n,k,j} = -\frac{k + 1}{n} b_{n,k+1,j} + \frac{1}{n} b_{n-1,k,j}.
\end{equation}
\noindent
It follows from here that
\begin{equation}\label{bnkj}
b_{n,k,j} = \frac{(-1)^{j-k} j!}{n! \, k!} H_{n,j-k} 
\end{equation}
\noindent
satisfies the equations in  \eqref{b recurrence}. 

\begin{Thm}
\label{thm-formb}
The polynomial $B_{n,k,j}$ in Theorem \ref{form-1} is given by 
\begin{equation}
B_{n,k,j}(x) =  \frac{(-1)^{j-k} j!}{n!k!}
H_{n,j-k} (1+x)^{n}.
\end{equation}
\end{Thm}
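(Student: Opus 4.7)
The plan is direct verification. Theorem~\ref{form-1} has already reduced the claim to a statement about the rational numbers $b_{n,k,j}$: namely that $B_{n,k,j}(x) = b_{n,k,j}(1+x)^n$ where the scalars $b_{n,k,j}$ are uniquely determined by the recurrences~(\ref{recu-b}). So it suffices to show that
$$b_{n,k,j} := \frac{(-1)^{j-k}\,j!}{n!\,k!}\,H_{n,j-k}$$
satisfies every equation of that system; uniqueness of the solution then yields the theorem. (Indeed, this is sketched in the paragraphs just before the statement; the job here is to record the verification cleanly, proceeding by induction on $n$ with $k$ treated in decreasing order.)

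I first dispatch the initial data. Using the conventions $H_{0,0}=1$ and $H_{0,m}=0$ for $m\geq 1$, the proposed formula yields $b_{0,j,j}=1$ and $b_{0,k,j}=0$ for $0\leq k\leq j-1$, matching the first two lines of~(\ref{recu-b}). The third line is also immediate: since $H_{n,0}=1$, one obtains $b_{n,j,j}=1/n!$, which plainly satisfies $b_{n,j,j}=\tfrac{1}{n}\,b_{n-1,j,j}$.

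The substantive step is the fourth recurrence. The only nontrivial ingredient is the identity
$$H_{n,m} = \frac{1}{n}H_{n,m-1} + H_{n-1,m} \qquad (n,m\geq 1),$$
obtained by splitting the defining sum according to whether the largest entry $a_m$ equals $n$ (contributing the first term) or lies in $\{1,\dots,n-1\}$ (contributing the second). Substituting the proposed closed form into the right-hand side of
$$b_{n,k,j} = -\frac{k+1}{n}b_{n,k+1,j} + \frac{1}{n}b_{n-1,k,j},$$
the factor $(k+1)$ cancels with $(k+1)!$, the leading minus converts $(-1)^{j-k-1}$ into $(-1)^{j-k}$, and an $(n-1)!$ is absorbed into $n!$. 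What remains is
$$\frac{(-1)^{j-k}\,j!}{n!\,k!}\Bigl(\frac{1}{n}H_{n,j-k-1}+H_{n-1,j-k}\Bigr),$$
which equals the proposed $b_{n,k,j}$ by the displayed $H_{n,m}$ identity.

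I expect no real obstacle; the argument is bookkeeping together with a single combinatorial identity. The only subtlety worth flagging is tracking the sign $(-1)^{j-k}$ as $k$ decreases from $j$, but the coefficient $-(k+1)/n$ in the recurrence is tailor-made to swap the sign and simultaneously produce the $1/n$ weight on $H_{n,j-k-1}$ needed by the $H_{n,m}$ recurrence.
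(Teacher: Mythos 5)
Your proposal is correct and follows essentially the same route as the paper: the text preceding Theorem~\ref{thm-formb} establishes the identity $H_{n,m}=\tfrac{1}{n}H_{n,m-1}+H_{n-1,m}$ by splitting on whether $a_m=n$, and then verifies that the closed form for $b_{n,k,j}$ satisfies the uniquely solvable system \eqref{b recurrence}. Your write-up merely makes the bookkeeping (initial data, the $k=j$ case, and the sign/factorial cancellations) explicit, which matches the paper's argument.
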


\medskip

\begin{Note}
The recursion $H_{n,m}=\tfrac{1}{n}H_{n,m-1}+H_{n-1,m}$, with $n$ fixed, can 
be solved explicitly. The initial term is $H_{1,m}=1$. It may be checked 
directly that 
\begin{equation}
H_{n,m} = \sum_{k=1}^{n} (-1)^{k+1} \binom{n}{k} k^{-m}.
\end{equation}
\end{Note}

For $n=2$, $H_{2,j-k}$ is a geometric series; therefore (\ref{bnkj}) gives
\begin{equation}
	b_{2,k,j} = \frac{(-1)^{j-k} j! \, (2^{j-k+1} - 1)}{k! \, 2^{j-k+1}}.
\end{equation}

\medskip

The last step in the determination of $f_{n,j}$ is the 
construction of the polynomial $A_{n,j}(x)$ in (\ref{form-f}). \\

The equation for $A_{n,j}(x)$ comes from (\ref{eqn-a}): 
\begin{equation}
A_{n,j}'(x) = A_{n-1,j}(x) + \frac{(-1)^{j} j!}{n!}H_{n,j-1} 
(1+x)^{n-1},
\label{recu-a1}
\end{equation}
\noindent
with the boundary conditions $A_{0,j}(x) = 0$ and $A_{n,j}(0) = 0$.

The polynomial $A_{n,j}(x)$ is now written as 
\begin{equation}
A_{n,j}(x) = \sum_{r=1}^{n} c_{n,r,j}x^{r}
\end{equation}
\noindent
for some coefficients $c_{n,r,j}$. The fact that $A_{n,j}$ is of 
degree $n$ comes directly from (\ref{recu-a1}). This recurrence also 
yields
\begin{eqnarray}
c_{n,1,j} & = & \alpha_{n,j} \label{one-1} \\
c_{n,r,j} & = & \frac{1}{r}c_{n-1,r-1,j} + \frac{1}{r} \binom{n-1}{r-1} \alpha_{n,j}, \label{two-1}
\end{eqnarray}
\noindent
where 
\begin{equation}
\alpha_{n,j} = \frac{(-1)^{j} j! H_{n,j-1}}{n!}.
\end{equation}

The claim is that the solution of this recurrence yields the 
following expression for the polynomial $A_{n,j}(x)$. \\

\begin{Thm}
The polynomial $A_{n,j}(x)$ in Theorem \ref{form-1} is given by
\begin{equation}
A_{n,j}(x)  = \frac{(-1)^{j} j!}{n!} \sum_{r=1}^{n} 
\binom{n}{r} \left[  \sum_{k=0}^{r-1} 
\frac{H_{n-k,j-1}}{n-k} \right] x^{r}.
\label{thm-anj}
\end{equation}
\end{Thm}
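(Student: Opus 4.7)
The plan is to proceed by induction on $n$, using the recurrence (\ref{one-1})--(\ref{two-1}) that the coefficients $c_{n,r,j}$ of $A_{n,j}(x) = \sum_{r=1}^{n} c_{n,r,j} x^r$ were already shown to satisfy. What has to be verified is therefore the purely formal claim that
\begin{equation*}
c_{n,r,j} \;=\; \frac{(-1)^{j} j!}{n!}\binom{n}{r}\sum_{k=0}^{r-1}\frac{H_{n-k,j-1}}{n-k}
\end{equation*}
satisfies both the initial condition (\ref{one-1}) and the two-variable recurrence (\ref{two-1}).

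For the base case, taking $r=1$ in the proposed formula collapses the inner sum to the single term $k=0$, giving $\frac{(-1)^{j}j!}{n!}\binom{n}{1}\cdot\frac{H_{n,j-1}}{n}=\frac{(-1)^{j}j!H_{n,j-1}}{n!}=\alpha_{n,j}$, matching (\ref{one-1}). The case $n=1$ of the whole theorem then follows since $A_{n,j}$ has degree $n$, so $r$ only ranges up to $n$.

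For the inductive step, I would assume the formula holds for $c_{n-1,r-1,j}$ and substitute it into (\ref{two-1}). The key algebraic observation is the elementary identity
\begin{equation*}
\frac{1}{r}\binom{n-1}{r-1} \;=\; \frac{1}{n}\binom{n}{r},
\end{equation*}
which pulls a common factor of $\frac{(-1)^{j}j!}{n!}\binom{n}{r}$ out of both terms on the right of the recurrence. Reindexing the induction hypothesis by $k \mapsto k+1$ turns its sum $\sum_{k=0}^{r-2}\frac{H_{n-1-k,j-1}}{n-1-k}$ into $\sum_{k=1}^{r-1}\frac{H_{n-k,j-1}}{n-k}$, while the second term of the recurrence contributes exactly the missing $k=0$ summand $\frac{H_{n,j-1}}{n}$. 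Adding the two gives the desired $\sum_{k=0}^{r-1}\frac{H_{n-k,j-1}}{n-k}$.

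No step here looks like a real obstacle: once the reindexing trick and the binomial identity are seen, the verification is mechanical, and the boundary condition $A_{n,j}(0)=0$ is automatic because the closed form starts at $r=1$. The only mild subtlety is keeping track of the sign conventions and the fact that the formula for $c_{n-1,r-1,j}$ with $r=1$ (needed as a base-of-base) should be read as an empty sum, consistent with $c_{n-1,0,j}=0$.
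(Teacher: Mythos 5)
Your proposal is correct and follows essentially the same route as the paper, whose proof consists of the one-line remark that the closed form for $c_{n,r,j}$ is verified by direct substitution into (\ref{one-1}) and (\ref{two-1}). You simply spell out that verification, via the identity $\tfrac{1}{r}\binom{n-1}{r-1}=\tfrac{1}{n}\binom{n}{r}$ and the reindexing $k\mapsto k+1$, which is exactly the computation the paper leaves implicit.
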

\begin{proof}
Direct replacement of 
\begin{equation}
c_{n,r,j} = \frac{(-1)^{j} j!}{n!} \binom{n}{r} 
\sum_{k=0}^{r-1} \frac{H_{n-k,j-1}}{n-k}
\end{equation}
\noindent
in (\ref{one-1}) and (\ref{two-1}).
\end{proof}

\medskip

\begin{Note}
The case $j=1$ of (\ref{thm-anj}) reduces to the expression for $-xA_{n}(x)$
given in Theorem \ref{thm-an}. 
\end{Note}

\medskip

The final statement deals with an arithmetical property of the 
coefficients of  $B_{n,j}(x)$. This was first observed symbolically 
and it was one of the motivations for the work discussed here.

\begin{Cor}
Assume $p$ is a prime that divides the denominator of a
coefficient of $B_{n,k,j}(x)$ in Theorem \ref{thm-formb}. Then
$p \leq \max(n,k)$.
\end{Cor}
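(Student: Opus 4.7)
The plan is to read off each coefficient of $B_{n,k,j}(x)$ from the explicit formula in Theorem \ref{thm-formb} and then bound the primes appearing in its denominator by separately analyzing each factor.

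First, expanding $(1+x)^n$ in the expression
\[
B_{n,k,j}(x) = \frac{(-1)^{j-k} j!}{n! \, k!} \, H_{n,j-k} \, (1+x)^n
\]
shows that the coefficient of $x^i$ (for $0 \le i \le n$) equals
\[
\frac{(-1)^{j-k}}{k!} \cdot \frac{j!}{n!} \cdot H_{n,j-k} \cdot \binom{n}{i}.
\]
A prime $p$ divides the denominator of this rational number (in lowest terms) precisely when $v_p$ of the expression is negative. Since $\binom{n}{i}$ is an integer, it contributes a non-negative $p$-adic valuation and can be ignored; likewise $j!$ sits in the numerator and can only \emph{cancel} denominator primes.

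The task therefore reduces to bounding the primes in the denominator of $\tfrac{1}{n!\,k!} H_{n,j-k}$. The factorials $n!$ and $k!$ can only introduce primes $p \le n$ and $p \le k$ respectively. For $H_{n,j-k}$, observe from its definition
\[
H_{n,j-k} = \sum_{1 \le a_1 \le \cdots \le a_{j-k} \le n} \frac{1}{a_1 \cdots a_{j-k}}
\]
that every summand has the form $1/(a_1 \cdots a_{j-k})$ with each $a_\ell \le n$, so each denominator divides $\operatorname{lcm}(1,2,\dots,n)^{j-k}$. Taking a common denominator shows that the reduced-form denominator of $H_{n,j-k}$ has only prime factors $\le n$.

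Combining these observations, every prime appearing in the denominator of the coefficient lies in $\{\text{primes} \le n\} \cup \{\text{primes} \le k\}$, hence is at most $\max(n,k)$. There is no real obstacle here; the only point that might momentarily confuse is the presence of $j!$ in the numerator when $j > \max(n,k)$, and the observation above—that numerator factors cannot \emph{create} denominator primes—dispenses with it immediately.
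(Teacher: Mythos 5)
Your proof is correct and follows essentially the same route as the paper, which states the corollary as a direct consequence of the explicit formula in Theorem \ref{thm-formb}: the only possible denominator primes come from $n!$, $k!$, and the denominator of $H_{n,j-k}$ (whose prime factors are at most $n$ since each summand is a reciprocal of a product of integers in $\{1,\dots,n\}$), while $j!$ and $\binom{n}{i}$ sit in the numerator. Your write-up simply makes explicit the valuation bookkeeping the paper leaves implicit.
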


\noindent
{\bf Acknowledgments}. The authors wish to thank the referees for several 
suggestions and a 
careful reading of an earlier version of the paper. The second
author was partially funded by
$\text{NSF-DMS } 0070567$. The work of the third author was partially 
supported by Tulane VIGRE Grant $0239996$. 

\bigskip


\begin{thebibliography}{10}



\bibitem{amram}

T.~Amdeberhan and V.~Moll. A formula for a quartic integral: a survey of old proofs and some new
  ones.
 {\em The Ramanujan Journal}, 18:91--102, 2009.



\bibitem{bomouni1}

G.~Boros and V.~Moll.
 A criterion for unimodality.
 {\em Elec. Jour. Comb.}, 6:1--6, 1999.



\bibitem{irrbook}

G.~Boros and V.~Moll.
 {\em Irresistible {I}ntegrals}.
 Cambridge {U}niversity {P}ress, {N}ew {Y}ork, 1st edition, 2004.



\bibitem{branden1}

P.~Br\"{a}nd\'{e}n.
 Iterated sequences and the geometry of zeros; available at 
\url{http://arxiv.org/abs/0909.1927}.



\bibitem{brenti1994}

F.~Brenti.
 Log-concave and unimodal sequences in {A}lgebra, {C}ombinatorics and
  {G}eometry: an update.
 {\em Contemporary Mathematics}, 178:71--89, 1994.




\bibitem{chen2009a}

W.~Y.~C. Chen and E.~X.~W. Xia.
 A proof of {M}oll's minimum conjecture.
 {\em Preprint}, 2009.




\bibitem{fisk-1}

S.~Fisk.
 Questions about determinants and polynomials; available at 
\url{http://arxiv.org/abs/0808.1850}.





\bibitem{gr}

I.~S. Gradshteyn and I.~M. Ryzhik.
 {\em Table of {I}ntegrals, {S}eries, and {P}roducts}.
 Edited by A. Jeffrey and D. Zwillinger. Academic Press, New York, 7th
  edition, 2007.




\bibitem{hardy4}

G.~H. Hardy, E.~M.~Wright; revised~by D.~R. Heath-Brown, and J.~Silverman.
 {\em An {I}ntroduction to the {T}heory of {N}umbers}.
 Oxford University Press, 6th edition, 2008.




\bibitem{kauers-paule}

M.~Kauers and P.~Paule.
 A computer proof of {M}oll's log-concavity conjecture.
 {\em Proc. Amer. Math. Soc.}, 135:3837--3846, 2007.




\bibitem{kortram1}

R.~A. Kortram.
 Another computation of $\int_{0}^{\infty} e^{-x^{2}} \, dx$.
 {\em Elemente der Mathematik}, 48:170--172, 1993.




\bibitem{kummer2}

E.~Kummer.
 \"{U}ber {E}rg\"anzungss\"atze zu den allgemeinen
  {R}eziprozit\"atsgesetzen.
 {\em J. {R}eine {A}ngew. {M}ath.}, 44:93--146, 1852.




\bibitem{mathar}

R.~Mathar.
 Series of reciprocal powers of $k$-almost primes; available at 
\url{http://arxiv.org/abs/0803.0900}.




\bibitem{mcnamara1}

P.~R. McNamara and B.~Sagan.
 Infinite log-concavity: developments and conjectures.
 {\em Adv. Appl. Math.}, To appear, 2010.





\bibitem{sloane}

N.~Sloane.
 {\texttt{The On-Line Encyclopedia of Integer Sequences}, available at}
 \url{http://www.research.att.com/~njas/sequences/}.




\bibitem{stanley1989a}

R.~Stanley.
 Log-concave and unimodal sequences in {A}lgebra, {C}ombinatorics and
  {G}eometry. graph theory and its applications: {E}ast and {W}est ({J}inan,
  1986).
 {\em Ann. New York Acad. Sci.}, 576:500--535, 1989.




\bibitem{stanley2008}

R.~Stanley.
 Personal communication.
 2008.




\bibitem{wilf1}

H.~S. Wilf.
 {\em generatingfunctionology}.
 Academic Press, 1st edition, 1990.



\end{thebibliography}

\end{document}